\numberwithin{equation}{section}
\theoremstyle{plain}
\newtheorem{thm}{Theorem}[section]
\newtheorem{cor}[thm]{Corollary}
\newtheorem{prop}[thm]{Proposition}
\newtheorem{conj}[thm]{Conjecture}
\theoremstyle{definition}
\newtheorem{defn}[thm]{Definition}
\newtheorem{ex}[thm]{Example}
\theoremstyle{remark}
\newtheorem{rem}[thm]{Remark}
\newtheorem{obs}[thm]{Observation}
\renewcommand{\arraystretch}{1.2} 
\title[\L ojasiewicz exponent of a surface: an intrinsic view ]{\ \L ojasiewicz exponent of a surface: an intrinsic view}
\author{E. Bilgin}
\address{Mathematics Group,
Middle East Technical University, 
\newline {\hphantom{a} } Northern Cyprus Campus
99738 Kalkanl{\i}, G{\"u}zelyurt, TRNC, via Mersin 10, Turkey}
 \email{bemel@metu.edu.tr}
 \author{G. Kaya}
\address{Department of Mathematics,  Galatasaray University\\  Ortak{\"o}y 34357, Istanbul, Turkey}
 \email{kagulay@gmail.com}
\author{M. Tosun}
\address{Department of Mathematics, Galatasaray University\\  Ortak{\"o}y 34357,  Istanbul, Turkey}
\email{mrltosun@gmail.com (Corresponding Author)}
\thanks{This work is supported by the projects 113F293 and 18F320 under the programs of the Scientific and Technological Research Council of Turkey. All authors declare that they have no conflicts of interest.}
\subjclass[2000]{58K20, 32S25}
\begin{document}
\begin{abstract}
For a surface $X$ with an ADE-type singularity, we establish a relation between the elements of the local ring ${\mathcal O}_{X, 0}$ and the \L ojasiewicz exponent $\mathcal{L}_0(X)$ and we give an estimate of $\mathcal{L}_0(X)$ when $X$ has a rational singularity of multiplicity 3.
 \end{abstract}
 
\maketitle \tableofcontents

\section{Introduction}

\noindent  Let $F: \mathbb C^N\longrightarrow \mathbb C$ be an analytic function such that the origin is an isolated singularity of $X=F^{-1}(0)$. The \L ojasiewicz exponent $\mathcal{L}_0(X)$ at ${\bf 0}\in \mathbb C^N$ is defined as the infimum of the elements in the set
\begin{equation}
\left\{\theta>0 \  \mid  \ \ \exists\  U\subset \mathbb C^N\ {\rm and}\  \exists \ c\in \mathbb{R}_+  \ \hbox{\rm such}\  \hbox{\rm that}\  \|{\bf z}\|^{\theta} \leq c\cdot \| \nabla F({\bf z})\|  \   \hbox{\rm for}\  \hbox{\rm all}\ {\bf z}\in U \right\} 
\label{eqn1}
\end{equation}
 Here $\|{\bf z}\|=\max_i\{|z_i|\}$ with ${\bf z}=(z_1,\dots,z_N)\in \mathbb{C}^N$ and   
 $\nabla F=(\frac{\partial F}{\partial z_1},\dots,\frac{\partial F}{\partial z_N}): (\mathbb{C}^N,0)\longrightarrow (\mathbb{C}^N,0)$ is the gradient of $F$. The inequality in \ref{eqn1} is called the \L ojasiewicz gradient inequality.  As the name suggests, the first use of an inequality of this nature is due to \L ojasiewicz in \cite{loj1,loj2}.  It is conjectured in \cite{teis1} that $\mathcal{L}_0(X)$ is a topological invariant. We know that the conjecture is true for the following class of hypersurfaces because the weights are a topological invariant \cite{yau, saeki}. 

\begin{thm}  \label{weight} \cite{KOP} Let $F:\mathbb{C}^N\rightarrow \mathbb{C}$ be a weighted homogeneous polynomial with weight $w=(w_1,\ldots ,w_N)$ and of degree $d$.  Assume $d\geq w_i$ for all $i$. Then 
$$\mathcal{L}_0(F)\leq \max_{i=1}^{N}\{w_i-1\}$$ 
where the equality holds when $N=3$.
\end{thm}

\noindent In particular, if $X$ is a surface with a rational singularity of multiplicity $2$ at the origin, called ADE-type singularity, $\mathcal{L}_0(X)$ can be computed directly by this formula.

{\scriptsize
\renewcommand*{\arraystretch}{1,3}
\begin{table}[h]
    \begin{tabular}{ | p{4.5cm} | p{2.9cm}  |  p{2.5cm}  | p{2.5cm}  |}
    \hline  
ADE-type singularity  & $(w_1,w_2,w_3)$ & $d$ & $\mathcal{L}_0(X)$ \\
 \hline
  $A_n$, $(n=2k)$: $z_3^2+z_2^2+z_1^{n+1}=0$ & $(2,2k+1,2k+1)$ & $4k+2$ & $n$ \\
   \hline
  $A_n$, $(n=2k+1)$: $z_3^2+z_2^2+z_1^{n+1}=0$ & $(1,k+1,k+1)$ & $2k+2$ & $n$ \\
   \hline
  $D_n$: $z_3^2+z_1z_2^2+z_1^{n-1}=0$ & $(2,n-2,n-1)$ & $2(n-1)$ & $n-2$ \\
   \hline
  $E_6$: $z_3^2+z_2^3+z_1^4=0$ & $(3,4,6)$ & $12$ & $3$ \\
   \hline
  $E_7$: $z_3^2+z_2^3+z_1^3z_2=0$ & $(4,6,9)$ & $18$ & $\frac{7}{2}$ \\
   \hline
  $E_8$: $z_3^2+z_2^3+z_1^5=0$ & $(6,10,15)$ & $30$ & $4$ \\
  \hline
 \end{tabular}
  \caption{ \L ojasiewicz exponent of ADE-singularities}
  \label{tablo1}
\label{tab}
\end{table}
}

\noindent  In the next section, we introduce some properties of a rational singularity of multiplicity $m\geq 2$.  In Section 3, we give some nice properties for the \L ojasiewicz exponent of an ideal in the local ring $\mathcal O_{X,0}$. 

\begin{defn}{\rm \cite{lejteis}} \label{ideal} Let $X$ be a reduced equidimensional complex analytic space. 
Let $I:=<f_1,\ldots ,f_r>$ and $J:=<g_1,\ldots ,g_s>$ be two ideals in $\mathcal{O}_X$. The \L ojasiewicz exponent of $I$ with respect to $J$, denoted by $\mathcal{L}_J(I)$, is  the infimum of the set
\begin{equation}
\left\{\eta >0 \mid \  \exists \ U \   {\rm and}\ \exists \ c>0 \ {\rm such}\  {\rm that}\ {\displaystyle \sup_{i=1}^{s}}\  \vert g_i({\bf z})\vert^{\eta } \leq c\  {\displaystyle \sup_{i=1}^{r}}\ \vert f_j({\bf z})\vert \ {\rm for}\ {\rm all  } \ {\bf z}\in U \right\}
\label{eqn2}
\end{equation} where $U$ is an open neighborhood of the origin in $X$. 
\end{defn}

\noindent If this set is empty we say that $\mathcal{L}_J(I)=\infty $. Note that $\mathcal{L}_J(I)< \infty$ when 
$I\subset \sqrt{J}$. 

\begin{prop} {\rm \cite{lejteis}}
With preceding notation, we have $\mathcal{L}_J(I)\in \mathbf Q_+$.
\end{prop}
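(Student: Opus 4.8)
The plan is to pass to a modification of $X$ on which $I$ and $J$ both become principal, and then to read $\mathcal{L}(I,J)$ off from the orders of vanishing of their generators along finitely many divisors lying over $0$. Let $X$ be a small representative of the germ, so $\mathcal{O}_{X,0}$ is its local ring, and assume $0\in V(I)$ (this holds whenever $I$ is contained in the maximal ideal of $\mathcal{O}_{X,0}$, in particular in the situations of interest; without it one would get $\mathcal{L}(I,J)=0$). By Hironaka's theorem there is a proper surjective morphism $\pi\colon Y\to X$ with $Y$ smooth which principalizes $I$ and $J$ with normal crossings, i.e.\ $I\mathcal{O}_Y=\mathcal{O}_Y(-D)$ and $J\mathcal{O}_Y=\mathcal{O}_Y(-D')$ for effective divisors $D=\sum_k a_kE_k$, $D'=\sum_k b_kE_k$ the union of whose supports is a simple normal crossings divisor on $Y$. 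Only the finitely many components $E_k$ meeting $\pi^{-1}(0)$ will matter.

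First I would transfer the defining inequality to $Y$. Fix a small ball $U$ about $0$ in $X$. Since $f_1,\dots,f_r$ generate $I$, near any point of $\pi^{-1}(U)$ we may write $\pi^*f_j=u_jh$ with $h$ a local equation of $D$ and $(u_1,\dots,u_r)$ the unit ideal; hence $\sup_j|\pi^*f_j|$ and $|h|$ differ by factors bounded above and below on compact sets, and likewise $\sup_i|\pi^*g_i|$ is comparable to $|h'|$, a local equation of $D'$. As $\pi$ is proper, $\pi^{-1}(\overline U)$ is compact for $U$ small, and since the two suprema are constant along the fibres of $\pi$, the inequality $\sup_i|g_i(z)|^{\theta}\le c\,\sup_j|f_j(z)|$ for $z$ near $0$ is equivalent to $|h'|^{\theta}\le c'|h|$ holding in a neighbourhood of $\pi^{-1}(0)$.

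Second, I would carry out the divisorial computation, where the smooth normal-crossings model is what makes things transparent. Near a point $y\in\pi^{-1}(0)$ lying on the components $E_{k_1},\dots,E_{k_s}$, choose local coordinates with $E_{k_i}=\{x_i=0\}$; then $h=x_1^{a_{k_1}}\cdots x_s^{a_{k_s}}\cdot(\text{unit})$ and $h'=x_1^{b_{k_1}}\cdots x_s^{b_{k_s}}\cdot(\text{unit})$, so $|h'|^{\theta}/|h|$ equals $|x_1|^{\theta b_{k_1}-a_{k_1}}\cdots|x_s|^{\theta b_{k_s}-a_{k_s}}$ up to a factor bounded above and below near $y$, and this is locally bounded near $y$ exactly when $\theta b_{k_i}\ge a_{k_i}$ for every $i$. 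Since this must hold at every $y\in\pi^{-1}(0)$, the previous two steps give that $\sup_i|g_i|^{\theta}\le c\,\sup_j|f_j|$ holds near $0$ if and only if $\theta b_k\ge a_k$ for every component $E_k$ meeting $\pi^{-1}(0)$.

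It then only remains to extract the exponent. If no finite $\theta$ satisfies all of these inequalities then $\mathcal{L}(I,J)=\infty$ and there is nothing to prove; otherwise such a $\theta$ exists, which forces $b_k\ge 1$ whenever $a_k\ge 1$. Since $0\in V(I)$, the local equation $h$ of $D$ vanishes at every point of $\pi^{-1}(0)$, so at least one component $E_k$ meeting $\pi^{-1}(0)$ has $a_k\ge 1$, and consequently
\[
\mathcal{L}(I,J)=\inf\{\theta>0:\theta b_k\ge a_k\ \text{for all }k\}=\max\Big\{\tfrac{a_k}{b_k}:E_k\cap\pi^{-1}(0)\neq\emptyset,\ a_k\neq 0\Big\},
\]
a maximum of finitely many positive rationals, so $\mathcal{L}(I,J)\in\mathbf{Q}_+$ (and the infimum in Definition~\ref{ideal} is in fact attained). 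I expect the main obstacle to be in the two middle steps, namely controlling the pulled-back inequality uniformly throughout a neighbourhood of $\pi^{-1}(0)$ — including at the crossing points of the $E_k$, not only at their generic points; the smooth normal-crossings model disposes of this, as would working on the normalized blow-up of $I$ (where the $E_k$ over $0$ with $a_k>0$ are precisely the Rees valuations of $I$) together with the valuative description of integral closure and the reformulation $\mathcal{L}(I,J)\le p/q\iff J^p\subseteq\overline{I^q}$.
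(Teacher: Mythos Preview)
The paper does not supply its own proof of this proposition; it is quoted from \cite{lejteis} and left unproved. There is therefore nothing to compare against in the paper itself.

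Your argument is essentially the original one of Lejeune-Jalabert and Teissier: pass to a proper modification on which both ideals become locally principal with normal-crossings support, use properness to transfer the inequality upstairs, and then compute in monomial coordinates that boundedness of $|h'|^{\theta}/|h|$ near a crossing point is equivalent to $\theta b_{k_i}\ge a_{k_i}$ for each component through that point. The conclusion
\[
\mathcal{L}(I,J)=\max_k\frac{a_k}{b_k}
\]
(over components meeting $\pi^{-1}(0)$ with $a_k>0$) is exactly the divisorial formula the paper itself relies on later, in the special form of Proposition~\ref{defn} with $J=\mathcal{M}$ and $\pi$ the minimal resolution of a rational surface singularity. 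Your proof is correct; the only cosmetic point is that in the stated generality (a reduced equidimensional analytic algebra) one does not need the full strength of Hironaka: the normalized blow-up of the product $IJ$ already makes both pullbacks invertible, and the divisorial valuations centered over $0$ on that blow-up give the same finite maximum of rationals, as you note in your closing remark about Rees valuations and the reformulation $\mathcal{L}(I,J)\le p/q\iff J^p\subseteq\overline{I^q}$.
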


\noindent In Section 3, we also relate $\mathcal{L}_J(I)$ with the length $\ell (I)$ of the ideal in $\mathcal O_{X,0}$ in order to study 
 the \L ojasiewicz exponent of $X$ by the local data of the singular point  instead of the ambient data in $(\mathbb C^N,0)$.  
 When $X$ has a rational singularity of multiplicity $m\geq 3$ we study the 
\L ojasiewicz exponent of the mapping $F=(f_1,\ldots ,f_k): \mathbb C^N\rightarrow \mathbb C^k$. In Section 4, we compute $\mathcal{L}_0(X)$ when $m=3$ at the origin. We then conjecture that  \L ojasiewicz exponent of a rational singularity of multiplicity $m$ is bounded by the length of a special ideal in $\mathcal O_{X,0}$.  

\vskip.2cm

\section{Integrally closed ideals in the local ring of a rational singularity}

\noindent Let $X$ be the germ of a normal surface in $\mathbb C^N$ with a rational singularity at $0$ and  ${\mathcal O}_{X,0}$ be its local ring. An element $g\in {\mathcal O}_{X,0}$ is said to be integral on an ideal $I\subset  {\mathcal O}_{X,0}$ if it satisfies an equation $g^n +a_1g^{n-1} +\ldots +a_n =0$
with $a_i\in I^i$ for all $i = 1,\ldots ,n$. Denote by $\bar I$ the set of all elements in ${\mathcal O}_{X,0}$ which are integral  over $I$; it is an ideal and called the integral closure of $I$ in  ${\mathcal O}_{X,0}$. We have
$I\subseteq \bar I$. When $\bar I=I$ we say that $I$ is an integrally closed ideal in ${\mathcal O}_{X,0}$ \cite{Swanson-Huneke}. 


\vskip.2cm

\noindent   Let $\pi: \tilde X\rightarrow X$ the minimal resolution of $X$. It is well known 
that $E:=\pi^{-1}(0)$ is normal crossing and each irreducible component $E_i$ of $E=E_1\cup \ldots \cup E_n$ is a rational curve. 

\begin{thm}{\rm \cite{Lipman}} \label{lip} The product of integrally closed ideals in ${\mathcal O}_{X,0}$ is an integrally closed ideal in ${\mathcal O}_{X,0}$. 
\end{thm}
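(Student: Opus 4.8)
The assertion is Lipman's fundamental theorem on complete ($=$ integrally closed) ideals over a rational surface singularity, and the plan is to translate it, through the dictionary between ideals and divisors on resolutions, into a surjectivity statement governed by the cohomological vanishing that characterizes rationality. \emph{Setup.} Let $I,J\subseteq\mathcal O_{X,0}$ be integrally closed; one first reduces to the case that they are $\mathcal M$-primary, the positive-dimensional part of $V(I)$ and $V(J)$ contributing only an invertible, principal-type factor that passes through products untouched. Choose a resolution $\pi'\colon X'\to X$ factoring through the minimal one $\pi$ and on which both ideal sheaves become invertible, say $I\,\mathcal O_{X'}=\mathcal O_{X'}(-D)$ and $J\,\mathcal O_{X'}=\mathcal O_{X'}(-D')$ with $D,D'$ effective and supported on the exceptional locus of $\pi'$. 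By construction $\mathcal O_{X'}(-D)$ is generated by the pullbacks of a set of generators of $I$ (likewise $\mathcal O_{X'}(-D')$), so these sheaves are globally generated; restricting to a component $E_i\cong\mathbb P^1$ gives $\deg\bigl(\mathcal O_{X'}(-D)|_{E_i}\bigr)=-D\cdot E_i\ge 0$, so $D$, $D'$, and hence $D+D'$, are anti-nef. Finally, since $I$ and $J$ are integrally closed, the description of integral closure through normalised blow-ups (\cite{Swanson-Huneke}) gives $I=\Gamma\bigl(X',\mathcal O_{X'}(-D)\bigr)$ and $J=\Gamma\bigl(X',\mathcal O_{X'}(-D')\bigr)$.

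\emph{Reduction.} The product satisfies $IJ\,\mathcal O_{X'}=\mathcal O_{X'}(-D-D')$, again invertible, so the same description of integral closure yields $\overline{IJ}=\Gamma\bigl(X',\mathcal O_{X'}(-D-D')\bigr)$. On the other hand $IJ$ is precisely the image of the multiplication map
\[
\mu\colon \Gamma\bigl(X',\mathcal O_{X'}(-D)\bigr)\otimes_{\mathcal O_{X,0}}\Gamma\bigl(X',\mathcal O_{X'}(-D')\bigr)\longrightarrow \Gamma\bigl(X',\mathcal O_{X'}(-D-D')\bigr).
\]
Hence $IJ=\overline{IJ}$, i.e. $IJ$ is integrally closed, if and only if $\mu$ is surjective; the whole theorem reduces to this.

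\emph{Surjectivity via rationality.} Here rationality enters through two facts. First, for every effective, exceptionally supported, anti-nef divisor $B$ one has $H^1\bigl(X',\mathcal O_{X'}(-B)\bigr)=0$, part of the standard cohomology of a rational singularity, coming from $R^1\pi'_\ast\mathcal O_{X'}=0$ (\cite{Artin}, \cite{Lipman}). Second, each such $\mathcal O_{X'}(-B)$ is globally generated. Applying these to $D$, $D'$, and $D+D'$, one proves the surjectivity of $\mu$ by the argument of Zariski, extended by Lipman to the rational case: using base-point-freeness one passes to general members of $|\mathcal O_{X'}(-D)|$ and $|\mathcal O_{X'}(-D')|$, which have no fixed exceptional part and smooth moving parts in general position, and expresses an arbitrary section of $\mathcal O_{X'}(-D-D')$ as an $\mathcal O_{X,0}$-combination of products of such sections, the vanishing $H^1\bigl(X',\mathcal O_{X'}(-D-D')\bigr)=0$ being exactly what makes this bookkeeping close up. Alternatively one may route the surjectivity through Lipman's extension to rational singularities of Zariski's unique factorization of complete ideals into simple ones.

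\emph{Main obstacle.} Everything up to the reduction is formal once the ideal--divisor dictionary is set up; the genuine content is the surjectivity of $\mu$. It requires both a careful choice of the auxiliary general sections (so that their moving parts avoid the exceptional configuration and sit in general position) and the essential use of the two vanishing facts above; this is the step where the rationality hypothesis $R^1\pi'_\ast\mathcal O_{X'}=0$ is really consumed, and it is where I expect the bulk of the work to lie.
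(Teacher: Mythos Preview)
The paper does not give its own proof of this theorem: it is stated with the citation \cite{Lipman} and immediately followed by the next paragraph of exposition, so there is nothing in the paper to compare your argument against. The result is quoted as background input, in the same way as Theorems~\ref{bijection} and~\ref{div}.

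That said, your sketch is broadly faithful to Lipman's original argument: the reduction of $IJ=\overline{IJ}$ to surjectivity of the multiplication map $H^0(\mathcal O_{X'}(-D))\otimes H^0(\mathcal O_{X'}(-D'))\to H^0(\mathcal O_{X'}(-D-D'))$, and the use of the rationality hypothesis via $H^1$-vanishing for anti-nef exceptional divisors, are exactly the two pillars of Lipman's proof. Two points deserve caution. First, your reduction to the $\mathcal M$-primary case (``the positive-dimensional part \dots\ contributing only an invertible, principal-type factor'') is not quite right as stated, since $\mathcal O_{X,0}$ need not be factorial and height-one primes need not be principal; one argues instead through divisorial ideals and the class group, or simply notes (as the paper implicitly does) that only the $\mathcal M$-primary case is actually used. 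Second, the heart of the matter---the surjectivity of $\mu$---is sketched rather loosely: the phrase ``expresses an arbitrary section \dots\ as an $\mathcal O_{X,0}$-combination of products of such sections, the vanishing $H^1=0$ being exactly what makes this bookkeeping close up'' hides the real work. Lipman's argument proceeds by an induction on $D\cdot D'$, peeling off a single exceptional curve at a time and using both the $H^1$-vanishing and global generation at each step; it is not a one-shot general-position argument. Your diagnosis that this is where the rationality hypothesis is genuinely consumed is correct, but the mechanism you describe is more impressionistic than the actual proof.
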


\noindent Let ${\mathcal M}$ be the maximal ideal  in ${\mathcal O}_{X,0}$. An ideal $I$ in ${\mathcal O}_{X,0}$ is called ${\mathcal M}$-primary if ${\mathcal M}=\sqrt I$ and $\bar I\subseteq \sqrt I$.  Let $\mathcal{S}({\bf I})$ be the set of ${\mathcal M}$-primary integrally closed ideals $I$ in ${\mathcal O}_{X,0}$ such that the pullback $I\mathcal{O}_{\tilde X}$ of $I$ by $\pi $ is invertible; equivalently, ${I}{\mathcal O}_{\tilde X}={\mathcal O}(-D_I)$ where $D$ is a positive divisor supported on $E$. The set $\mathcal{S}({\bf I})$ is a semigroup with respect to the product of ideals. The elements in 
$\mathcal{S}({\bf I})$ can be studied using their associated positive divisors as follows: By \cite{Lipman}, any element $h$ in ${\mathcal O}_{X,0}$ defines a positive divisor $D_h$ supported on $E$ such that $\pi^{\ast }(h)=D_h+T_h$ where $T_h$ is the strict transform of $h$ by $\pi $. If we denote by $\nu_{E_i}(h)$ the vanishing order of the divisor $\pi^{\ast }(h)$ along $E_i$, we have $D_h=\sum_{i=1}^{n}\nu_{E_i}(h)E_i$ with $\nu_{E_i}(h)\geq 1$ for all $i$ since $E$ is connected. Hence $(D_h\cdot E_i)\leq 0$ for all $i$. Let $\mathcal{E}(\pi )$ be the set of the positive divisors $D_h$ supported on $E$ for all $h\in {\mathcal O}_{X,0}$. The set $\mathcal{E}(\pi )$ is a semigroup with respect to the addition, called {\it the semigroup of Lipman associated with} $\pi $. 

\begin{thm}{\rm \cite{Artin, Lipman}}
\label{bijection}
There exists a bijection between the semigroups $\mathcal{S}({\bf I})$ and $\mathcal{E}(\pi )$. 
\end{thm}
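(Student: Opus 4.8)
The plan is to exhibit the two natural maps between the semigroups and to check that they are mutually inverse homomorphisms. Define $\Phi\colon\mathcal{S}\to\mathcal{E}(\pi)$ by $\Phi(I)=D_I$, where $D_I$ is the divisor determined by $I\mathcal{O}_{\tilde X}=\mathcal{O}_{\tilde X}(-D_I)$, and $\Psi\colon\mathcal{E}(\pi)\to\mathcal{S}$ by $\Psi(D)=I_D$, the stalk at $0$ of $\pi_*\mathcal{O}_{\tilde X}(-D)$. I would first check that $\Phi$ is well defined with values in $\mathcal{E}(\pi)$: since $I$ is $\mathcal{M}$-primary we have $I\subseteq\mathcal{M}$, so every generator of $I$ vanishes along $E$ and $D_I$ is positive and supported on $E$; and since $I\mathcal{O}_{\tilde X}$ is invertible it is generated by the pullbacks of the generators of $I$, hence globally generated, hence its degree on each $E_i$ is non-negative, i.e.\ $D_I\cdot E_i\le 0$ for all $i$. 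For $\Psi$, it is immediate that $I_D$ is a proper ideal (the constant $1$ cannot lie in $\pi_*\mathcal{O}_{\tilde X}(-D)$ because $D>0$) whose zero set is $\{0\}$ (since $\mathcal{O}_{\tilde X}(-D)$ is trivial off $E$), so $\sqrt{I_D}=\mathcal{M}$; and $I_D$ is integrally closed, by the valuative criterion: an element $g$ of $\mathcal{O}_{X,0}$ integral over $I_D$ has $g\circ\pi$ integral, locally on $\tilde X$, over the invertible ideal sheaf $\mathcal{O}_{\tilde X}(-D)$, which is integrally closed stalkwise because $\mathcal{O}_{\tilde X}$ is normal, so $g\circ\pi\in\mathcal{O}_{\tilde X}(-D)$ and $g\in\pi_*\mathcal{O}_{\tilde X}(-D)$.

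The substantive step --- and the only place where rationality of the singularity is essential --- is the identity $I_D\mathcal{O}_{\tilde X}=\mathcal{O}_{\tilde X}(-D)$, equivalently surjectivity of the adjunction morphism $\pi^*\pi_*\mathcal{O}_{\tilde X}(-D)\to\mathcal{O}_{\tilde X}(-D)$. Here I would invoke the cohomological results of Artin and Lipman for rational singularities: using $R^1\pi_*\mathcal{O}_{\tilde X}=0$, the rationality of the curves $E_i$, and the vanishing $H^1(\mathcal{O}_C)=0$ for every effective $C$ supported on $E$, one obtains $R^1\pi_*\mathcal{O}_{\tilde X}(-D)=0$ for every $D\in\mathcal{E}(\pi)$, and from this that $\mathcal{O}_{\tilde X}(-D)$ is generated by its global sections relative to $\pi$; surjectivity of adjunction follows. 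This simultaneously shows that $\Psi$ indeed takes values in $\mathcal{S}$ and that $D_{I_D}=D$, i.e.\ $\Phi\circ\Psi=\mathrm{id}_{\mathcal{E}(\pi)}$.

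It remains to prove $\Psi\circ\Phi=\mathrm{id}_{\mathcal{S}}$, i.e.\ $I_{D_I}=I$ for $I\in\mathcal{S}$. By construction $I_{D_I}$ is the stalk at $0$ of $\pi_*\mathcal{O}_{\tilde X}(-D_I)=\pi_*(I\mathcal{O}_{\tilde X})$, and the classical description of the integral closure through invertible pullbacks gives $\pi_*(I\mathcal{O}_{\tilde X})_0=\bar I$ whenever $I\mathcal{O}_{\tilde X}$ is invertible --- once more the valuative criterion, because a divisorial valuation realised along a component of $E$ at which the value of $g$ is at least that of $I$ forces $g$ into $I\mathcal{O}_{\tilde X}$ near that component. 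Hence $I_{D_I}=\bar I=I$, the last equality since $I$ is integrally closed. Finally, both maps respect the semigroup structures: $IJ\in\mathcal{S}$ by Theorem~\ref{lip} (it is integrally closed, $\sqrt{IJ}=\mathcal{M}$, and $IJ\mathcal{O}_{\tilde X}=\mathcal{O}_{\tilde X}(-D_I-D_J)$ is invertible), and $D_{IJ}=D_I+D_J$ as recorded before the statement, so $\Phi(IJ)=\Phi(I)+\Phi(J)$; being bijective, $\Phi$ is an isomorphism of semigroups and $\Psi=\Phi^{-1}$ is one as well. I expect the main obstacle to be the global-generation step $I_D\mathcal{O}_{\tilde X}=\mathcal{O}_{\tilde X}(-D)$: for a non-rational singularity $\pi_*\mathcal{O}_{\tilde X}(-D)$ need not generate $\mathcal{O}_{\tilde X}(-D)$, so that $D\mapsto I_D$ fails to land in $\mathcal{S}$ and the correspondence breaks down; the rationality hypothesis has to be fed in precisely at this point, through the vanishing of $R^1\pi_*\mathcal{O}_{\tilde X}(-D)$.
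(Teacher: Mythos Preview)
Your argument is correct and follows the standard Artin--Lipman approach. Note, however, that the paper does not give its own proof of this statement: it is stated with a citation to \cite{Artin, Lipman} and left unproved. Your outline is precisely the content of those references, so there is nothing to compare.
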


\noindent  In fact, for  $I, J\in \mathcal{S}({\bf I})$ we have 
$IJ{\mathcal O}_{\tilde X}={\mathcal O}_{\tilde X}(-D_I-D_J)$ with $D_I+D_J\in \mathcal{E}(\pi )$. 
Conversely,  to each positive divisor $D$ supported on $E$ such that $D\cdot E_i\leq 0$ for all $i$, 
we associate un ideal $I_D$ in ${\mathcal O}_{X,0}$ defined as the stalk at $0$ of $\pi_{\ast }{\mathcal O}_{\tilde X}(-D)$. 

\begin{defn} \cite{bondil} Let $I\in \mathcal{S}({\bf I})$. An element $g\in I$ is called generic in $I$ if $\nu_{E_i}(g)\leq\nu_{E_i}(h)$ for all $h\in I$ and  for all $i=1,\ldots ,n$.
\end{defn}

\noindent  The order $\nu_{E_i}(I)$ of an ideal $I$ is defined as 
$$\nu_{E_i}(I):=inf\lbrace \nu_{E_i}(h)\mid h\in I\rbrace .$$ Hence, for a generic element $g$ in $I$, we have 
$\nu_{E_i}(I)=\nu_{E_i}(g)$, so $D_I=D_g$. This says that any ideal $I$ in $\mathcal{S}({\bf I})$ corresponds to an element $D_I\in \mathcal{E}(\pi )$ through its generic element. The smallest element in $\mathcal{E}(\pi )$ is called the Artin divisor $D_{\mathcal M}$ of $\pi $ and corresponds to (generic element of) the  maximal ideal ${\mathcal M}$ in ${\mathcal O}_{X,0}$. Moreover, $\mathcal{E}(\pi )$ is a partially ordered  set by $\leq $ defined as follows: For any $D,D'\in \mathcal E(\pi )$ we say that $D\leq D'$ whenever $\nu_{E_i}(D)\leq \nu_{E_i}(D')$ for all $i=1,\ldots ,n$; here the notation $\nu_{E_i}(D)$ means $\nu_{E_i}(h)$ such that $h$ is the generic element in the ideal $I_D$. To compute a generating set for the semigroup $\mathcal{E}(\pi )$, let us fix an ordering on the irreducible components of $E$. Let ${M}(E)=(e_{ij})$ denote the intersection matrix of $E$ where $e_{ij}=(E_i\cdot E_j)$ for all ${1\leq i,j\leq n}$; it is a negative definite symmetric matrix. If $h$ is the generic element of an ideal $I\in \mathcal{S}({\bf I})$ its associated divisor $D_h=\sum_{i=1}^{n}\nu_{E_i}(h)E_i$ in $\mathcal{E}(\pi )$ satisfies the equality:
$${M}(E)\cdot(\nu_{E_1}(h), \nu_{E_2}(h),\ldots, \nu_{E_n}(h))^t=(y_1,y_2,\ldots,y_n)^t$$
where $y_i\leq 0$ for all $i$ except at least one $y_{i_0}<0$ where $i_0\in \lbrace 1,\ldots ,n\rbrace $. Let us denote by $\delta_i$ the column matrix with coefficients $0$ everywhere except in the $i$-th row, where the entry is $-1$. Consider the system ${M}(E)\cdot (m_{i1},m_{i2},\cdots ,m_{in})^t=\delta_i$. 
Put $F'_i=\sum_{j=1}^{n}m_{ij}E_j$. We have $m_{ij}\in \mathbf Q^+$ for all $i,j$. Write $F'_i$ as $F_i=k_i\cdot {F'_i}$ for (smallest) $k_i\in  \mathbf Q^+$ 
such that the coordinates of the $F_i$ are positive integers and relatively prime. By construction, each ${F_i}$ belongs to ${\mathcal E}(\pi )$ and each element in $\mathcal{E}(\pi )$ can be written as a linear combination of $F_1,\ldots ,F_n$ with coefficients in 
$\mathbf Q^+$. Put ${\mathcal G}(\pi  ):=\lbrace F_1,\ldots ,F_n\rbrace $.  An element of ${\mathcal G}(\pi  )$ is called a {\it $\mathbf Q$-generator} 
for ${\mathcal E}(\pi  )$. 

\section{\L ojasiewicz exponent and length of an ideal}

\noindent  Consider two ideals $I,J\in {\mathcal O}_{X,0}$. By \cite{lejteis, bivia}, the \L ojasiewicz exponent of 
$I$ with respect to $J$ is given by 
\begin{equation}
 \mathcal{L}_J(I)={\min}\lbrace \frac{a}{b}\mid a,b\in {\mathbf Z}_{\geq 1},\  \  {J}^a\subseteq {\overline {I^b}}\rbrace 
 \label{eqn3}
 \end{equation}
 In particular, we have:

\begin{prop} 
\label{tan} Let $I,J\in \mathcal{S}({\bf I})$. 
Then  
$$\mathcal{L}_J(I):={\min}\lbrace \frac{a}{b}\mid a,b\in {\mathbf Z}_{\geq 1},\  \  a\cdot D_{J}\geq b\cdot D_I\rbrace $$
\end{prop} 

\begin{proof}
Put ${\overline {I^b}}=I^b$ since $I\in \mathcal{S}({\bf I})$ is integrally closed. Then we rewrite the inclusion at the right hand side in terms of the associated divisors through their generic elements. Since $I\cdot {\mathcal O}_{X}$ is locally principal in $\tilde X$ we have $\mathcal{O}_{\tilde X}(-D_{J})\supseteq \mathcal{O}_{\tilde X}(-D_I)$; so $D_I\geq D_{J}$. The fact $D_{I^b}=b\cdot D_I$ gives the inequality. 
\end{proof}


\begin{prop} {\rm \cite{teis2}}
\label{defn}
Let $I\in \mathcal{S}({\bf I})$. Then 
$$ \mathcal{L}_0(I)={\displaystyle \max_{i=1}^n} \left\{ \frac{\nu_{E_i}(D_I)}{\nu_{E_i}(D_{\mathcal M})}\right\} $$
 In particular, we have $\mathcal{L}_0(\mathcal{M})=1$.
\end{prop}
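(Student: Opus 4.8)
The plan is to start from the previous proposition, which gives
$\mathcal{L}_0(I)=\min\{a/b\mid a,b\in\mathbf Z_{\geq 1},\ a\cdot D_{\mathcal M}\geq b\cdot D_I\}$,
and to translate the divisorial inequality $a\cdot D_{\mathcal M}\geq b\cdot D_I$ into the coordinatewise inequalities $a\,\nu_{E_i}(D_{\mathcal M})\geq b\,\nu_{E_i}(D_I)$ for every $i=1,\ldots,n$, using the partial order on $\mathcal E(\pi)$ recalled before Theorem \ref{bijection}. Since all the $\nu_{E_i}(D_{\mathcal M})$ are positive integers (every component appears with multiplicity $\geq 1$ in the Artin cycle, as noted when $\mathcal E(\pi)$ was introduced), this system is equivalent to $a/b\geq \nu_{E_i}(D_I)/\nu_{E_i}(D_{\mathcal M})$ for all $i$, hence to $a/b\geq \max_i \nu_{E_i}(D_I)/\nu_{E_i}(D_{\mathcal M})$.

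First I would fix notation: write $q:=\max_{i=1}^n \nu_{E_i}(D_I)/\nu_{E_i}(D_{\mathcal M})\in\mathbf Q_{>0}$, say $q=p/r$ in lowest terms with $p,r\in\mathbf Z_{\geq 1}$. For the inequality $\mathcal{L}_0(I)\leq q$ I would simply exhibit the admissible pair $(a,b)=(p,r)$: indeed $p/r=q\geq \nu_{E_i}(D_I)/\nu_{E_i}(D_{\mathcal M})$ for every $i$ gives $p\,\nu_{E_i}(D_{\mathcal M})\geq r\,\nu_{E_i}(D_I)$ for all $i$, i.e.\ $p\cdot D_{\mathcal M}\geq r\cdot D_I$, so $p/r$ lies in the set whose minimum defines $\mathcal{L}_0(I)$. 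For the reverse inequality $\mathcal{L}_0(I)\geq q$, take any admissible pair $(a,b)$; then $a\,\nu_{E_i}(D_{\mathcal M})\geq b\,\nu_{E_i}(D_I)$ for every $i$, and dividing by $b\,\nu_{E_i}(D_{\mathcal M})>0$ yields $a/b\geq \nu_{E_i}(D_I)/\nu_{E_i}(D_{\mathcal M})$ for every $i$, hence $a/b\geq q$; taking the infimum over admissible pairs gives $\mathcal{L}_0(I)\geq q$. Combining the two inequalities gives the formula.

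The last assertion $\mathcal{L}_0(\mathcal M)=1$ is then immediate: taking $I=\mathcal M$ we have $D_I=D_{\mathcal M}$, so each ratio $\nu_{E_i}(D_{\mathcal M})/\nu_{E_i}(D_{\mathcal M})$ equals $1$ and the maximum is $1$.

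I do not expect a serious obstacle here; the only point requiring a little care is to make sure the equivalence between the divisorial inequality and the pointwise numerical inequalities is applied in the correct direction, and that positivity of $\nu_{E_i}(D_{\mathcal M})$ (needed to divide without reversing inequalities and to guarantee $q<\infty$) is explicitly invoked. One should also remark that the minimum in the defining set is attained — so that writing $q=p/r$ in lowest terms and using $(a,b)=(p,r)$ is legitimate — which follows because the set of admissible ratios is a nonempty subset of $\{a/b\ge q\}\cap\mathbf Q$ bounded below by $q$ and containing $q$ itself.
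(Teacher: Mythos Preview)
Your argument is correct and follows essentially the same route as the paper: the paper does not give a separate proof environment for this proposition but simply writes ``This permits us to deduce'' after establishing the characterization $\mathcal{L}_0(I)=\min\{a/b\mid a\cdot D_{\mathcal M}\geq b\cdot D_I\}$ in the preceding proposition, so the intended argument is exactly the coordinatewise translation you carry out. Your write-up makes explicit the elementary step the paper leaves to the reader; the only cosmetic slip is that the partial order on $\mathcal E(\pi)$ is recalled \emph{after} Theorem~\ref{bijection}, not before.
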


\begin{cor} Let $I,J\in \mathcal{S}({\bf I})$. Then we have $\mathcal{L}_J(I)\geq \frac{\mathcal{L}_0(I)}{\mathcal{L}_0(J)}$.
\end{cor}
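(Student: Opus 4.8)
The plan is to translate everything into the divisor language of $\mathcal{E}(\pi )$ and then reduce the inequality to a one-line manipulation of maxima of ratios of vanishing orders. By Proposition~\ref{defn} we already know $\mathcal{L}_0(I)=\max_i \nu_{E_i}(D_I)/\nu_{E_i}(D_{\mathcal M})$, and likewise for $J$; so the first task is to obtain the analogous description of $\mathcal{L}(I,J)$ itself, namely
$$\mathcal{L}(I,J)=\max_{i=1}^{n}\frac{\nu_{E_i}(D_I)}{\nu_{E_i}(D_J)}.$$
Strictly speaking only the inequality ``$\geq$'' is needed for the corollary, but the equality is no harder and clarifies the picture.

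First I would record the order-reversing dictionary between $\mathcal{S}$ and $\mathcal{E}(\pi )$: for $I_1,I_2\in\mathcal{S}$ one has $I_1\subseteq I_2$ if and only if $D_{I_1}\geq D_{I_2}$. Indeed both ideals are the stalk at $0$ of $\pi_{\ast}\mathcal{O}_{\tilde X}(-D_{I_k})$, so an inclusion of ideals forces every element of $I_1$ to vanish along each $E_i$ to order at least $\nu_{E_i}(D_{I_2})$, i.e. $\nu_{E_i}(D_{I_1})\geq\nu_{E_i}(D_{I_2})$; the converse is immediate from $\mathcal{O}(-D_{I_1})\subseteq\mathcal{O}(-D_{I_2})$. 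Combining this with Theorem~\ref{lip} (so that $\overline{I^b}\in\mathcal{S}$ with associated divisor $bD_I$, and $J^a$ has generic divisor $aD_J$ because the pullback of $J$ is invertible), the condition $J^a\subseteq\overline{I^b}$ appearing in Proposition-Definition~\ref{bivia} becomes $aD_J\geq bD_I$ in $\mathcal{E}(\pi )$, that is $a\,\nu_{E_i}(D_J)\geq b\,\nu_{E_i}(D_I)$ for every $i$. Since every $\nu_{E_i}(D_J)\geq 1$, this reads $a/b\geq\nu_{E_i}(D_I)/\nu_{E_i}(D_J)$ for all $i$, and minimizing $a/b$ over admissible pairs $(a,b)$ yields the displayed formula for $\mathcal{L}(I,J)$.

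With the formula in hand the corollary is immediate. Pick an index $i_0$ realizing the maximum in Proposition~\ref{defn} for $I$, so that $\mathcal{L}_0(I)=\nu_{E_{i_0}}(D_I)/\nu_{E_{i_0}}(D_{\mathcal M})$. Then
$$\mathcal{L}(I,J)\;\geq\;\frac{\nu_{E_{i_0}}(D_I)}{\nu_{E_{i_0}}(D_J)}\;=\;\frac{\nu_{E_{i_0}}(D_I)}{\nu_{E_{i_0}}(D_{\mathcal M})}\cdot\frac{\nu_{E_{i_0}}(D_{\mathcal M})}{\nu_{E_{i_0}}(D_J)}\;=\;\mathcal{L}_0(I)\cdot\Big(\frac{\nu_{E_{i_0}}(D_J)}{\nu_{E_{i_0}}(D_{\mathcal M})}\Big)^{-1}\;\geq\;\frac{\mathcal{L}_0(I)}{\mathcal{L}_0(J)},$$
where the last step uses $\nu_{E_{i_0}}(D_J)/\nu_{E_{i_0}}(D_{\mathcal M})\leq\max_i \nu_{E_i}(D_J)/\nu_{E_i}(D_{\mathcal M})=\mathcal{L}_0(J)$. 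In other words this is just the elementary inequality $\max_i(x_i/y_i)\cdot\max_i(y_i/z_i)\geq\max_i(x_i/z_i)$, applied with $x_i=\nu_{E_i}(D_I)$, $y_i=\nu_{E_i}(D_J)$, $z_i=\nu_{E_i}(D_{\mathcal M})$, and then divided through by $\mathcal{L}_0(J)=\max_i(y_i/z_i)$.

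The only genuinely substantive point is the first one: verifying that $J^a\subseteq\overline{I^b}$ is equivalent to the divisor inequality $aD_J\geq bD_I$. This is exactly the order-compatibility of the bijection of Theorem~\ref{bijection}, combined with the fact (Theorem~\ref{lip}) that products and integral closures of ideals in $\mathcal{S}$ remain in $\mathcal{S}$, so that all four ideals $I^b$, $\overline{I^b}$, $J^a$, $\mathcal{M}$ have well-defined divisors in $\mathcal{E}(\pi )$; once this is settled, everything else is the formal computation above. Note that the argument also shows the corollary is generally strict, since the maxima in $\mathcal{L}_0(I)$ and $\mathcal{L}_0(J)$ need not be attained at the same component $E_i$.
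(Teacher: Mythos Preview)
Your proof is correct and follows essentially the same route as the paper: both reduce to the formula $\mathcal{L}(I,J)=\max_i \nu_{E_i}(D_I)/\nu_{E_i}(D_J)$ and then apply the elementary inequality $\max_i(x_i/z_i)\leq \max_i(x_i/y_i)\cdot\max_i(y_i/z_i)$. The only cosmetic difference is that you fix the index where $\mathcal{L}_0(I)$ is attained while the paper fixes the index where $\mathcal{L}_0(J)$ is attained; your write-up is in fact more complete, since you spell out the divisorial justification of the formula for $\mathcal{L}(I,J)$ that the paper invokes tacitly in the final equality of its proof.
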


\begin{proof} Let $\mathcal{L}_0(J)=\displaystyle\max_{i=1}^n \left\{\frac{\nu_{E_i}(D_J)}{\nu_{E_i}(D_{\mathcal M})}\right\}=\frac{\nu_{E_k}(D_J)}{\nu_{E_k}(D_{\mathcal M})}$. 
In this case \\

$ {\mathlarger { \frac{\mathcal{L}_0(I)}{\mathcal{L}_0(J)}}}=\displaystyle \frac{\displaystyle \max_{i=1}^n  \left\{ \frac{\nu_{E_i}(D_I)}{\nu_{E_i}(D_{\mathcal M})}\right\}} { \frac{\nu_{E_k}(D_J)}{\nu_{E_k}(D_{\mathcal M})}} = \displaystyle  \max_{i=1}^n  \left\{ \frac{\nu_{E_k}(D_{\mathcal M})\nu_{E_i}(D_I)}{\nu_{E_k}(D_J)\nu_{E_i}(D_{\mathcal M})}\right\}   \leq \max_{i=1}^n \left\{ \frac{\nu_{E_i}(D_I)}{\nu_{E_i}(D_J)}\right\} = \mathcal{L}_J(I) $.
\end{proof}


\begin{prop}  For $I,J\in \mathcal{S}({\bf I})$ we have
$$\mathcal{L}_J(I)\geq \frac{(D_I\cdot D_I)}{(D_I\cdot D_J)}$$
\end{prop}

\begin{proof}
 By \cite{leteis, bondil}, the number $-(D_I\cdot D_I)$ equals the multiplicity $e(I)$ of the ideal $I$ in $\mathcal O_{X,0}$. By \cite{teis2}, we have $\mathcal{L}_J(I)\geq \frac{e(I)}{e_1(I\mid J)}$. Here $e_1(I\mid J)$ represents the mixed multiplicity of the ideals $I,J\in \mathcal{S}({\bf I})$ which is  given by
$$e_1(I\mid J)=\frac{1}{2} (e(IJ)-e(I)-e(J)) $$
Since $e(IJ)=-(D_I+D_J)\cdot
(D_I+D_J)$,  we have $e_1(I\mid J)=-(D_I\cdot D_J)$. \end{proof}

\begin{prop} For $I_1,I_2,J\in \mathcal{S}({\bf I})$, we have
$$\mathcal{L}_J(I_1I_2)=\mathcal{L}_J(I_1)+\mathcal{L}_J(I_2)$$
\end{prop}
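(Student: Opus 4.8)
The plan is to reduce the claimed additivity to the divisor description of the \L ojasiewicz exponent established in Proposition \ref{defn}, and then to the additivity of the divisor map on the semigroup $\mathcal S$. First I would recall from Theorem \ref{bijection} and the discussion preceding it that for $I_1,I_2\in\mathcal S$ one has $I_1I_2\mathcal O_X=\mathcal O_X(-D_{I_1}-D_{I_2})$, so that $D_{I_1I_2}=D_{I_1}+D_{I_2}$ in $\mathcal E(\pi)$; equivalently $\nu_{E_i}(D_{I_1I_2})=\nu_{E_i}(D_{I_1})+\nu_{E_i}(D_{I_2})$ for every $i=1,\dots,n$. This is the only structural input needed beyond Proposition \ref{defn}.

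Next I would like to apply Proposition \ref{defn}, but that proposition is stated for $\mathcal L_0(I)=\mathcal L(I,\mathcal M)$, i.e. only for $J=\mathcal M$. So the first genuine step is to note that its proof goes through verbatim with $\mathcal M$ replaced by any $J\in\mathcal S$: using Proposition-Definition \ref{bivia}, the condition $J^a\subseteq\overline{I^b}$ is rewritten through generic elements as $a\cdot D_J\geq b\cdot D_I$ in $\mathcal E(\pi)$, i.e. $a\,\nu_{E_i}(D_J)\geq b\,\nu_{E_i}(D_I)$ for all $i$, whence
$$\mathcal L(I,J)=\min\Bigl\{\tfrac ab \;\Big|\; a,b\in\mathbf Z_{\geq1},\ a\,\nu_{E_i}(D_J)\geq b\,\nu_{E_i}(D_I)\ \forall i\Bigr\}=\max_{i=1}^{n}\frac{\nu_{E_i}(D_I)}{\nu_{E_i}(D_J)}.$$
Granting this, the right-hand side of the desired identity is
$$\mathcal L(I_1,J)+\mathcal L(I_2,J)=\max_{i}\frac{\nu_{E_i}(D_{I_1})}{\nu_{E_i}(D_J)}+\max_{j}\frac{\nu_{E_j}(D_{I_2})}{\nu_{E_j}(D_J)},$$
while the left-hand side is $\max_{i}\bigl(\nu_{E_i}(D_{I_1})+\nu_{E_i}(D_{I_2})\bigr)/\nu_{E_i}(D_J)$ by the additivity of divisors recalled above.

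The crux is therefore purely combinatorial: show that for the positive rational vectors $p_i=\nu_{E_i}(D_{I_1})/\nu_{E_i}(D_J)$ and $q_i=\nu_{E_i}(D_{I_2})/\nu_{E_i}(D_J)$ one has $\max_i(p_i+q_i)=\max_i p_i+\max_i q_i$. In general $\max_i(p_i+q_i)\leq\max_i p_i+\max_i q_i$ trivially, so the content is the reverse inequality, which fails for arbitrary vectors and must use the special geometry of $\mathcal E(\pi)$. Here I would invoke the existence of a generic element: the maximum defining $\mathcal L(I_1,J)$ is attained, but more to the point, the divisor $D_{I_1}$ and $D_{I_2}$ both dominate the Artin cycle $D_{\mathcal M}$ and lie in the same semigroup; one expects (and I would prove, e.g. via the $\mathbf Q$-generators $\mathcal G(\pi)=\{F_1,\dots,F_n\}$ or via a direct argument on the support of the "maximizing" component) that the index $i$ realizing $\max_i\nu_{E_i}(D_{I_1})/\nu_{E_i}(D_J)$ can be taken to coincide with the one realizing $\max_i\nu_{E_i}(D_{I_2})/\nu_{E_i}(D_J)$, forcing equality. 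This coincidence of extremal components is the main obstacle; I would handle it by analyzing how $D_J$ compares componentwise with $D_{I_1},D_{I_2}$, using that $(D\cdot E_i)\leq0$ for all $i$ (the defining property of $\mathcal E(\pi)$) to rule out the configurations that would break additivity. Once that is in place, combining the three displayed equalities yields $\mathcal L(I_1I_2,J)=\mathcal L(I_1,J)+\mathcal L(I_2,J)$ and completes the proof.
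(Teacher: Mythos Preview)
Your approach is exactly the one taken in the paper: rewrite $\mathcal L(I,J)$ as $\max_i \nu_{E_i}(D_I)/\nu_{E_i}(D_J)$, use $D_{I_1I_2}=D_{I_1}+D_{I_2}$, and then try to split the maximum. The paper's proof simply writes
\[
\max_{i}\frac{\nu_{E_i}(D_{I_1})+\nu_{E_i}(D_{I_2})}{\nu_{E_i}(D_J)}
=\max_{i}\frac{\nu_{E_i}(D_{I_1})}{\nu_{E_i}(D_J)}+\max_{i}\frac{\nu_{E_i}(D_{I_2})}{\nu_{E_i}(D_J)}
\]
as if it were automatic; you are right to flag this as the crux, and in fact you are more careful than the paper here.

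The trouble is that this step cannot be repaired: the proposition is false as stated, so the geometric arguments you sketch to force the two maxima to occur at the same component cannot succeed. Take the $A_2$ singularity. The intersection matrix is $\begin{pmatrix}-2&1\\1&-2\end{pmatrix}$, the Artin divisor is $D_{\mathcal M}=(1,1)$, and the two $\mathbf Q$-generators of $\mathcal E(\pi)$ are $D_{I_1}=(2,1)$ and $D_{I_2}=(1,2)$, corresponding to simple ideals $I_1,I_2\in\mathcal S$. With $J=\mathcal M$ one gets $\mathcal L(I_1,\mathcal M)=\max\{2,1\}=2$ and $\mathcal L(I_2,\mathcal M)=\max\{1,2\}=2$, while $D_{I_1I_2}=(3,3)$ gives $\mathcal L(I_1I_2,\mathcal M)=3\neq 4$. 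Only the inequality $\mathcal L(I_1I_2,J)\leq \mathcal L(I_1,J)+\mathcal L(I_2,J)$ holds in general (and that direction is the trivial $\max_i(p_i+q_i)\leq\max_ip_i+\max_iq_i$). The equality you are asked to prove, and that the paper claims, is simply not true without an extra hypothesis ensuring the maximizing components coincide; the special case $I_1=I_2$ recovers the valid Proposition~\ref{power}.
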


\begin{proof}
 From the previous discussion we have:
\begin{eqnarray*}
\mathcal{L}_J(I_1I_2) & = & \max_{i=1}^{n}\frac{\nu_{E_i}(D_{I_1I_2})}{\nu_{E_i}(D_J)} \\
 & = &  \max_{i=1}^{n}\frac{\nu_{E_i}(D_{I_1} + D_{I_2})}{\nu_{E_i}(D_J)}\\
 & = &  \max_{i=1}^{n}\frac{\nu_{E_i}(D_{I_1}) + \nu_{E_i}(D_{I_2}) }{\nu_{E_i}(D_J)}\\
 & = &  \max_{i=1}^{n}\frac{\nu_{E_i}(D_{I_1})}{\nu_{E_i}(D_J)} + \max_{i=1}^{n}\frac{\nu_{E_i}(D_{I_2})}{\nu_{E_i}(D_J)}\\
 & = &  \mathcal{L}_J(I_1)+\mathcal{L}_J(I_2)
\end{eqnarray*}\end{proof}

\begin{cor} \label{power} For any $k\in \mathbf N^{\ast }$ and $I\in \mathcal{S}({\bf I})$ we have
$$\mathcal{L}_0(I^k)=k\cdot \mathcal{L}_0(I)$$
\end{cor}

\begin{rem} \label{exist}
When $X$ is an ADE-type singularity, Table \ref{tablo2} shows that there exists an element $I\in \mathcal{S}({\bf I})$ such that $\mathcal{L}_0(X)=\mathcal{L}_0(I)$. For an $A_n$-type singularity, using Theorem \ref{weight}, we get $\mathcal{L}_0(A_n)=n$. Note that 
the Artin divisor $D_{\mathcal M}$ is reduced, means $\nu_{E_i}(D_{\mathcal M})=1$ for all $i$; so, by Proposition \ref{defn}, the biggest coefficient $\nu_{E_i}(D_{\mathcal M})$ in each $D_I\in \mathcal E(\pi )$ gives the  \L ojasiewicz exponent of the corresponding $I\in \mathcal{S}({\bf I})$. In particular, $\mathcal{L}_0(I)=n$ for the ideal $I$ corresponding to the $\mathbf Q$-generator $D_I=E_1+2E_2+\ldots +(n-1)E_{n-1}+nE_n$ of $\mathcal{E}(\pi )$. For simplicity, we will use the notation $D_I=(\nu_{E_1}(D_I), \nu_{E_2}(D_I),\ldots ,\nu_{E_n}(D_I))=(1,2,3,\ldots ,n-1,n)$. 

\end{rem}

\noindent {\bf Length of an ideal.} The length (or co-length) of an ideal $I$ in a ring $R$, denoted by $\ell (R/I)$, is the dimension of $R/I$ over the field $\bf k$. Since $X$ has a rational singularity, each ${\mathcal M}$-primary ideal in ${\mathcal O}_{X,0}$ has finite length and the length of $I$, will be denoted by $\ell (I)$, in ${\mathcal O}_{X,0}$ is $dim_{\mathbb C}({\mathcal O}_{X,0}/I)$. Obviously, $\ell ({\mathcal M})=1$. It is easier to compute the length of an 
ideal in $\mathcal{S}({\bf I})$ using its associated divisor in  ${\mathcal E}(\pi )$.

\begin{thm} {\rm (\cite{watanabe}, Remark 3.2)} \label{div}
Let $I \in  \mathcal{S}({\bf I})$. Then
$${\ell }(I)=\frac{-(D_I\cdot D_I)-\sum_{i=1}^{n}\nu_{E_i}(D_I)(w_i-2)}{2}$$
where $w_i=-E_i^2$ for all $i$.
\end{thm}

\noindent In the sequel, we use the notations ${\ell }(D_I)$ and ${\ell }(I)$ equivalently and call the length of the ideal $I$. 

\vskip.2cm

\begin{prop}
With preceding notation,  we have $\mathcal{L}_{\mathcal M}(I)\leq \ell (I)$ for every $I\in \mathcal{S}({\bf I})$. 
\end{prop}

\begin{proof}
It results from the fact that we have $\mathcal{M}^p\subseteq I$ if ${\ell }(I)=p$ for an ideal $I$ (see Tables \ref{tablo2}).
\end{proof}

\begin{rem}  The length defines the map $\ell :\mathcal{S}({\bf I})\longrightarrow \mathbb{R}$ and we have $\ell (I\cdot J)=\ell (D_I)+\ell (D_J)$. For each $p\in \mathbb N^{\ast }$, there may not exist an ideal of length $p$ in $\mathcal O_{X,0}$ and, if exists, there are a finite number of ideal of length $p$ (see Table \ref{tablo2}).
\end{rem}

\noindent Let $D_I$ be an element in ${\mathcal E}(\pi )$. Consider the components $E_i$ of $E$ such that $(D_I\cdot E_i)<0$. Let us reindex these components as $F_1,\ldots ,F_k$. We have $k\leq n$ where $n$ is the total number of the irreducible components of $E$. Consider the set 
$$E-\lbrace F_1,\ldots ,F_k\rbrace =\prod {\mathcal E}^{j}$$
with $j=1,\ldots ,s$. Each sub-configuration ${\mathcal E}^{j}$ is called Tjurina component of $E$ with respect to $D_I$ and all elements of ${\mathcal E}(\pi )$ can be constructed by one of the process given in the following theorem: 

\begin{prop} \cite{tosun} Let $I,J \in  \mathcal{S}({\bf I})$. Then we have $D_I=D_J+D'$ for some positive divisor $D'$.   

\noindent $(i)$ If $D'=Z({\mathcal E}^j)$ we have $\ell (I)=\ell (J)+1$,

\noindent $(ii)$ If $D'=E_{i_0}$ such that $E_{i_0}$ is attached only to the vertices $E_j$ of $E$ such that $(D_I\cdot E_{j})<0$, we have $l(I)=\ell (J)-(D_J\cdot D')+1$.
\end{prop}

\begin{proof} Consider two divisors $D_I$ and $D_J$ in ${\mathcal E}(\pi )$ such that there exists a sequence of divisors in 
${\mathcal E}(\pi )$ such that $D_I=D_{I_0}<D_{I_1}<\ldots <D_{I_l}=D_J$; this corresponds to the sequence of ideals
$J=I_l\subset \ldots \subset I_1\subset I_0=I$ in $\mathcal{S}({\bf I})$.  Note that both sequences are not uniquely defined and 
If  $\ell (D_I)-\ell (D_J)=1$ for some $i$ then $D_{I_i}$ and $D_{I_{i+1}}$ are adjacent. 
By \cite{altinok-tosun}, for each $j$, the divisor $D_I+Z({{\mathcal E}^j})$ is in ${\mathcal E}(\pi )$ where $Z({\mathcal E}^j)$ is the smallest divisor of ${\mathcal E}^j$. This gives $(i)$. If $(D_I\cdot E_{i_0})<0$  for a vertex $E_{i_0}$ of $E$ and $E_{i_0}$ is attached only to the vertices $E_j$ of $E$ such that $(D_I\cdot E_{j})<0$ then the divisor $D_I+E_{i_0}$ is  in ${\mathcal E}(\pi )$. 
\end{proof}

\begin{rem} Let $I, J_1, J_2\in \mathcal{S}({\bf I})$. If $J_1\subseteq J_2$ then $D_{J_1}\geq D_{J_2}$. So $\ell (J_1)\geq \ell (J_2)$ and $\mathcal{L}_I(J_1)\geq \mathcal{L}_I(J_2)$.
However, if $J_1$ is not contained in the increasing sequence of $J_2$ then we
may get $\ell (J_1)\geq \ell (J_2)$ with $\mathcal{L}_I(J_1) <
\mathcal{L}_I(J_2)$. In particular, when $\ell (J_1) = \ell (J_2)$ we may
get $\mathcal{L}_I(J_1) \neq \mathcal{L}_I(J_2)$ as Table \ref{tablo2} shows.
\end{rem}

\begin{ex}
\noindent Consider the case where $X$ is an $E_8$-type singularity. The integral closure of the Jacobian ideal ${\mathcal J}=<z_3,z_2^2,z_1^4>$ is $\overline{{\mathcal J}}= (z_1^4,z_2^2,z_1^2z_2,z_3)$. Note that it is hard to compute the integral closure of an ideal if it is not a monomial ideal. The length $\ell (\overline{{\mathcal J}})$ is $6$. We have $\overline{{\mathcal J}}\subset {\mathcal M}$, ${\mathcal M}^5\subset \overline{{\mathcal J}}$ but 
${\mathcal M}^4\not \subset \overline{{\mathcal J}}$. More precisely, we get  ${\mathcal M}^{4t+1}\subset \overline{{\mathcal J}}^{t}$, ($t\in \mathbb N^*$), so $\mathcal{L}_0({\mathcal J})=4$. Furthermore,  by  \cite{gs, snoussi}, we know that  the generic element ${\mathfrak p}$ of $\overline {\mathcal J}$ corresponds to the divisor $D_{\mathfrak p}=(5,10,15,12,9,6,3,8)$ in ${\mathcal E}(\pi )$  according to the ordering in $E$ taken as  
 $$\begin{array}{ccccccc} E_1 & E_2 & E_3 & E_4 & E_5 & E_6 & E_7 \\  &  &  E_8 &  &   &  &  \end{array}$$
 We obtain $\ell (\overline{{\mathcal J}})=\ell (D_{\mathfrak p})=4$.  As Table \ref{tablo2} shows, we have $\mathcal{L}_0(X)\leq \ell (D_{\mathfrak p})$ for E-type singularities.


{\tiny 
\renewcommand*{\arraystretch}{1,3}
\begin{table}[h]
    \begin{tabular}{ | p{2.5cm} | p{.4cm}  | p{.7cm}  || p{2.8cm} | p{.4cm}  | p{.7cm} || p{3.5cm} | p{.4cm} | p{.7cm} |}
    \hline  
 Some $D_I$'s in $\mathcal{E}(\pi )$ for $E_6$-type &  $\ell (I)$  & $\mathcal{L}_0(I)$ &  Some $D_I$'s in $\mathcal{E}(\pi )$ for {\bf $E_7$}-type & $\ell (I)$ &  $\mathcal{L}_0(I)$  &  Some $D_I$'s in $\mathcal{E}(\pi )$ for {\bf $E_8$}-type & $\ell (I)$ &  $\mathcal{L}_0(I)$  \\
 \hline
 $(1,2,3,2,1,2)*$      & 1 & 1           &  $(2,3,4,3,2,1,2)*$    & 1  & 1            &   $(2,4,6,5,4,3,2,3)*$      &  1 & 1 \\ 
\hline
  $(2,3,4,3,2,2)$       & 2 & 2           &  $(2,4,6,5,4,2,3)*$    & 2  & 2            &   $(4,7,10,8,6,4,2,5)*$    & 2 & 2    \\
\hline
$D_{\mathfrak{p}}=(2,4,6,4,2,3)*$        & 3 & 2            &  $(2,4,6,5,4,3,3)*$    &  3 & 3/2         &  $(4,8,12,10,8,6,3,6)*$    & 3 & 2  \\
\hline
 $(4,5,6,4,2,3)*$     & $6 $ & 4       &    $(3,6,8,6,4,2,4)*$   &   3 & 2          &  $D_{\mathfrak{p}}=(5,10,15,12,9,6,3,8)*$   & 4 &  8/3   \\
\hline
 $(2,4,6,5,4,3)*$      & $6 $ & 4        & $D_{\mathfrak{p}}=(3,6,9,7,5,3,5)$  &  4 & 3 & $(6,12,18,15,12,8,4,9)*$    &  6 & 3   \\
\hline 
 $(3,6,8,6,3,4)$       & 6 & \textcolor{red}{3}           &  $(4,8,12,9,6,3,6)*$  &  6 & 3         &  $(7,14,20,16,12,8,4,10)*$  &   7 & 7/2 \\
\hline 
 $(3,6,9,6,3,5)$       & 7 & 3          & $(4,8,12,9,6,3,7)*$         & $7 $ & \textcolor{red}{7/2}       & $(7,14,21,17,13,9,5,11)$   & $8 $  & 11/3 \\
\hline 
 $(5,10,12,8,4,6)*$  &  15   &    4                & $(5,9,12,9,6,3,6)$         & $7 $ & 3       & $(8,14,20,16,12,8,4)$   & 8  & \textcolor{red}{4} \\
 \hline
  $(4,8,12,10,5,6)*$  &   15  &     4           &  $(4,8,12,10,7,4,6)$  &  7   & 4                & $(8,16,24,20,15,10,5,12)*$    & $10$ & 4 \\
\hline  
                                 &    &             &     $(6,12,18,15,10,5,9)*$  &  15   & 5                      &   $(10,20,30,24,18,12,6,15)*$   & 15 & 5 \\       
 \hline                                          
 \end{tabular}
 \caption{The $\mathbf Q$-generators are represented by * in each case}
 \label{tablo2}
\end{table}
}
\end{ex}

\begin{rem} Let $X=V(I)$. The generic element ${\mathfrak p}$ of the integral closure of the ideal $\mathcal J+I$ defines a curve, called the polar curve of $X$ \cite{teis1}. We have $\pi^{\ast }({\mathfrak p})=D_{\mathfrak p}+T_{\mathfrak p}$ and  the strict transform $T_{\mathfrak p}$ of ${\mathfrak p}$ by $\pi $ intersects the irreducible components $E_i$'s with $(D_{\mathfrak p}\cdot E_i)<0$.  These intersection points of $T_{\mathfrak p}$ and $E$ give the base points of ${\mathfrak p}$. In the case of ADE-singularities and the rational singularities with reduced Artin cycle,  the base points of ${\mathfrak p}$ are described precisely in \cite{gs, snoussi, spivakovsky}. It is still an open problem for other classes of rational singularities. Here we relate $\ell (D_{\mathfrak p})$ and $\mathcal{L}_0(X)$ and,  for RTP-singularities,  we find the candidate divisors for $D_{\mathfrak p}$.  
\end{rem}

\noindent  When $X$ is an $A_n$-type singularity, using \cite{spivakovsky},  we know that the strict transform $T_{\mathfrak p}$ of the polar curve  passes through the intersection point of two irreducible components in the middle of $E$ when with $n=2k$ and, $T_{\mathfrak p}$ intersects the unique irreducible component which is in the middle of $E$ when $n2k+1$. Hence 
$$D_{\mathfrak p}=(1,2,3,\ldots ,k-1,k,k,k-1,\ldots ,3,2,1), \ \ D_{\mathfrak p}=(1,2,3,\ldots,k,k+1,k,\ldots ,3,2,1)$$ for $n=2k$ and $n=2k+1$ respectively. Here the ordering of the irreducible components of $E$ is taken as 
$$\begin{array}{ccccccc} E_1 & E_2 & E_3 & E_4 & \ldots  & E_{n-1} & E_{n}  \end{array}$$ 
When $X$ is a $D_n$-type singularity, using  \cite{snoussi, gs} we get: 
$$D_{\mathfrak p}=(k,2k,2k-1,\ldots ,2,1,k), \ \ D_{\mathfrak p}=(k,2k,2k-1,2k-2,\ldots ,4,3,2,k), (k\geq 2)$$ 
for $n=2k$ and $n=2k+1$ respectively such that the ordering in $E$ is taken as 
$$\begin{array}{ccccccc} 
E_1 & E_2 & E_3 & \ldots & E_{n-1} \\  
&  E_n &  &  &   \end{array}$$
Consequently, when $X$ is $A_n$-type or $D_n$-type, we have ${\ell }(D_{\mathfrak p})=\frac{n}{2}$ for $n$ even, ${\ell }(D_{\mathfrak p})=\frac{n+1}{2}$ for $n$ odd. 

\noindent  In the cases of $E_6$-type and $E_7$-type singularities,  we get the  divisors $D_{\mathfrak p}$ as given in Table \ref{tablo2} 
with respect to the following orderings in $E$ 
$$\begin{array}{ccccccccccc} 
E_1 & E_2 & E_3 & E_4 & E_5 \ \  and    &  E_1 & E_2 & E_3 & E_4 & E_5 & E_6\\  
&   & E_6 &  &   &  &   & E_7 &  & &  \end{array}$$

\begin{obs}  If $X$ is of $ADE$-type then 
\begin{equation}
\label{inegalite}
\mathcal{L}_0(X)\leq \frac{mult_0(X)}{mult_0(X)-1}\cdot \mathcal{L}_0({D_{\mathfrak p}})
\end{equation}
\noindent We can also replace $\mathcal{L}_0({D_{\mathfrak p}})$ in the inequality by $\ell ({D_{\mathfrak p}})$.
\end{obs}

\vskip.2cm

\noindent Recall  that, for ADE-singularities, the Milnor number  $\mu (X)=dim_{\mathbb C}(\frac{\mathcal O_{\mathbb C^N,0}}{\mathcal J})$ of $X$ equals  the Tjurina number $\tau (X)=dim_{\mathbb C}(\frac{\mathcal O_{\mathbb C^N,0}}{<f,\frac{\partial f}{\partial z_1},\frac{\partial f}{\partial z_2},\frac{\partial f}{\partial z_3}>})$ which is the dimension of the base space of a semi-universal deformation of $X$ \cite{saito, wahl}. 

\section{\L ojasiewicz exponent of rational singularities of higher multiplicity}

\noindent Assume that $X\subset \mathbb C^N$ is the germ of a surface with a rational singularity at the origin of multiplicity $m>2$. By \cite{Artin},  the multiplicity $m$ equals $N-1$ and, by \cite{wahl}, $X$ is defined by $q:=\frac{(N-1)(N-2)}{2}$ equations. In other words, there exists $q$ germs of holomorphic functions  $f_i:\mathbb C^N\rightarrow \mathbb C$ so that the fiber over $0$  of the application $F:(f_1,f_2,\ldots ,f_q):\mathbb C^N\rightarrow \mathbb C^q$ is the surface $X=\{{\bf z}\in \mathbb C^N\mid  f_1({\bf z})=\ldots =f_q({\bf z})=0\}$ with multiplicity $N-1$ at $0$.  Let $g_1,\dots,g_s$ be the determinants of $(N-2)\times (N-2)$ minors of the Jacobian matrix $(\frac{\partial f_i}{\partial z_j})_{i,j}$ where $s:=\binom{N}{N-2}\binom{q}{N-2}$. The ideal ${\mathcal J}=<g_1,\dots,g_s>$ is called the Jacobian ideal. Consider the map
$$G:=(g_1,\dots,g_s):{\mathbb C}^N\rightarrow {\mathbb C}^{s}$$ 
As in the case where $q=1$, the  \L ojasiewicz exponent  of $X$ is defined as  the smallest element of the set 
\begin{equation}
\left\{\theta>0 \  \mid  \ \ \exists\  U\subset \mathbb C^N\ {\rm and}\  \exists \ c\in \mathbb{R}_+  \ \hbox{\rm such}\  \hbox{\rm that}\  \|{\bf z}\|^{\theta} \leq c\cdot \| G({\bf z})\|  \   \hbox{\rm for}\  \hbox{\rm all}\ {\bf z}\in U \right\} 
\label{eqn4}
\end{equation}

\noindent that is, $\mathcal{L}_0(X)=\mathcal{L}_0({\mathcal J})$.  Now let us restrict our attention on a special class of rational singularities. For this, recall that a map $F=(f_1,\ldots ,f_r):{\mathbb C}^N\longrightarrow {\mathbb C}^r$ is called quasi-homogeneous if there exists  ${\bf w}\in (\mathbf R_+-\lbrace 0\rbrace )^N$ and ${\bf d}\in (\mathbf R_+-\lbrace 0\rbrace )^r$ 
such that, for each $i$, we have
$$f_i(\lambda^{w_1}z_1, \lambda^{w_2}z_2,\ldots , \lambda^{w_N}z_N)= \lambda^{d_i}f_i(z_1,z_2,\ldots ,z_N)$$
\noindent where ${\bf w} = (w_1,\ldots ,w_N)$ and ${\bf d}=(d_1,d_2,\ldots ,d_r)=(d(f_1),\ldots ,d(f_r))$.

\begin{ex} 
\label{harau} Let $F=(f_1,f_2,f_3):{\mathbb C}^4\longrightarrow {\mathbb C}^3$ defines a rational singularity of multiplicity 3, called RTP-singularities. By  \cite{tj,mag}, they are defined by the equations presented in the following table, so each of them is quasi-homogeneous.
\end{ex} 
{\tiny
\renewcommand*{\arraystretch}{0,5}
\begin{table}[h]
    \begin{tabular}{ | p{1.9cm} | p{2.9cm} | | p{1.7cm} | p{2.9cm} | | p{1.4cm} | p{2.9cm} | }
    \hline
    {\bf RTP-type } & {\bf Equations} & {\bf RTP-type} &  {\bf Equations} & {\bf RTP-type} &  {\bf Equations}  \\
    \hline

$A_{k-1,\ell-1,m-1}$ & $xw-y^{m}w-y^{\ell+m}=0$ & $C_{k-1,\ell+1}$ & $xz-y^{k}w=0$ & $D_{k-1}$ & $xz-y^{k+2}-y^kw=0$ \\  $k\geq\ell \geq m\geq 1$  & $zw+y^{\ell}z-y^{k}w=0$    &   $k\geq 1$, $\ell \geq 2$    & $w^2-x^{\ell+1}-xy^2=0$  & $k\geq 1$  	   & $zw-x^2y^k=0$  \\ 
                                             & $xz-y^{m+k}=0$                 &                     			    & $zw-x^\ell y^k-y^{k+2}=0$  & & $w^2+y^2w-x^3=0$ \\
  \hline
$B_{k-1,n}$ & $xz-y^{k+\ell}-y^{k}w=0$ & $B_{k-1,n}$ & $xz-y^kw=0$ & $F_{k-1}$  &   $xz-y^kw =0$ \\ 
 $n=2\ell>3$  &  $w^2+y^\ell w-x^2y=0$ & $n=2\ell-1\geq 3$  & $zw-xy^{k+1}-y^{k+\ell}=0$ & $k\geq 1$ &$zw-x^2y^k-y^{k+3} =0$\\ 
        		       	   &     $zw-xy^{k+1}=0$                    &    				         &   $w^2-x^2y-xy^\ell=0$ &     			&   $ w^2-x^3-xy^3=0$ \\ 
           \hline            
$H_{n}$      	 &   $z^2-xw=0$  	 &   $H_{n}$  &  $z^2-xy^{k+1}-xyw=0$ & $H_{n}$ 	&  $z^2-xw=0$ 			\\ 
       $n=3k$       &   $zw+y^kz-x^2y=0$     	&   $n=3k+1$       &  $zw-x^2y=0$ & $n=3k-1$   &  $zw-x^2y-xy^k=0$ \\ 
                          &   $w^2+y^kw-xyz=0$    	&                       &  $w^2+y^kw-xz=0$ &            &  $w^2-y^kz-xyz=0$	  \\ 
           \hline    

       $E_{6,0}$		& $z^2-yw=0$ &  $E_{0,7}$    & $z^2-yw=0$                 &  $E_{7,0}$  &  $z^2-yw=0$ \\  
       			        &  $zw+y^2z-x^2y=0$ & 	& $zw-x^2y-y^4=0$         &                    &  $zw+x^2z-y^3=0$            \\ 
                              & $w^2+y^2w-x^2z=0$ & & $w^2-x^2z-y^3z=0$      &                   &   $w^2+x^2w-y^2z=0$       \\ 
           \hline    
    \end{tabular}
    \caption{The equations defining RTP-singularities}
    \label{tablo3}
\end{table}
}

\begin{prop} \cite{haraux} If $X$ is an RTP-singularity then $G=(g_1,\dots,g_{18}):{\mathbb C}^4\rightarrow {\mathbb C}^{18}$ is quasi-homogeneous with weight  ${\bf w}=(w_1,w_2,w_3,w_4)$ and with quasi-degree  ${\bf d}=(d(g_1),\ldots d({g_{18}}))\in \mathbf Z^{18}_{>0}$ and the \L ojasiewicz exponent $\mathcal{L}_0(X)$ is bounded as
$$ \frac{\min_{i,j}\lbrace d(g_1),\ldots d({g_{18}})\rbrace }{\min\lbrace w_1,w_2,w_3,w_4\rbrace}\leq  
\mathcal{L}_0(X)\leq  \frac{\max\lbrace d(g_1),\ldots d({g_{18}})\rbrace }{\min\lbrace w_1,w_2,w_3,w_4\rbrace}.$$
\end{prop} 

\noindent The possible values of the  \L ojasiewicz exponent of $\mathcal{L}_0(X)$ can be computed using the explicit equations and represented in the table below.

{\tiny
\renewcommand*{\arraystretch}{1,1}
\begin{table}[h]
    \begin{tabular}{ | p{1.0cm} | p{3.9cm} | p{0.5cm} | p{2.4cm} | p{3.6cm}  | } 
    \hline  
    {\bf RTP}       &  $\  \  {\bf weights}$  & min$\bf w$  &  min$\bf d$     &  max$\bf d$  \\
    \hline
$A_{k,\ell ,m}$ & $(m,1,k,\ell )$  &  1 &  $2m$   &        $2k+\ell -1$      \\
\hline
$B_{2\ell }$  &   $(2\ell -1,2,2k+1,2\ell )$ &  2  &   $4\ell -1$ for $l<k+1$, $4k+2$ for $l\geq k+1$  &    $4k+2\ell $ for $k=\ell $,  $4k+\ell $ for $l<k$,   $\ \ $ $6\ell $ for $l\geq k+1$   \\
\hline
$B_{2\ell -1}$ & $(2\ell -2, 2, 2k + 1, 2\ell -1)$ for $\ell < k+1$, $(2k,2,k+\ell ,k+\ell )$ for $\ell \geq k+1$ & 2 &  $4k+1$ for $\ell \geq k+1$,  $4\ell -3$ for $\ell <k+1$ &  $3k+3\ell -2$ for $\ell \geq k+1$, $2k+2\ell +4$  for $\ell <k$, 
$2k\ell -\ell -1$ for $\ell =k$  \\
\hline
$C_{k-1,\ell+1}$  & $(2,\ell ,k.\ell +\ell -1, \ell +1)$  & 2 &  $\ell +3$   &  $2k\ell +2\ell -2$  \\
\hline
$D_{k-1}$             &  $(4,3,3k+2,6)$     &  3 &     10  &  $6k+4$      \\
\hline
$F_{k-1}$             & $(6,4,4k+3,9)$     &  4 &   15       & $8k+6$     \\
\hline
 $H_{3k-1}$         &    $(3k-3, 3, 3k-2, 3k-1)$  & 3   &   $6k-4$  & $8,14,12k-12$ for $k\geq 4$     \\
\hline
$H_{3k}$             &    $(3k-2,3,3k-1,3k)$    & 3 &  $ 6k-2$  &  $18,27, 8k+4$ for $k\geq 4$   \\
\hline
 $H_{3k+1}$        &    $(3k-1, 3, 3k + 1, 3k)$  &  3  &   $6k-4$     &  $19,29,12k-4$ for $k\geq 4$    \\
\hline
$E_{6,0}$           &   (5,4,6,8)            &  4   &    13         &   24           \\
\hline
$E_{0,7}$           &   (9, 6, 10, 14)      &  6  &    20        &   36        \\ 
\hline
$E_{7,0}$           &  (5, 6, 8, 10)       &  5    &    16      &   30      \\ 
\hline
 \end{tabular}
\caption{$\mathcal{L}_0(X)$ for RTP-singularities}
\label{tablo4}
\end{table}
}

\noindent The following theorem gives a nice upper bound on $\mathcal{L}_0(X)$ by showing that the upper bound we obtained for the ADE-singularities is also valid for the RTP-singularities. 

\begin{thm}
\label{sinir}
Let $X$ be a surface with an RTP-type singularity.  Then the inequality \ref{inegalite} holds.
\begin{equation}
\mathcal{L}_0(X)\leq \frac{mult_0(X)}{mult_0(X)-1}\cdot \mathcal{L}_0({D_{\mathfrak p}})
\end{equation}
\end{thm}

\begin{proof} \label{olmadi}
Let $X$ be of $E_{0,7}$-type singularity.  We have $X=V(I)$ with $I=<f_1,f_2,f_3>$. Consider the ideal 
$${\mathcal J}=<J_{ij}(z_1z_2), J_{ij}(z_1z_3), J_{ij}(z_1z_4), J_{ij}(z_2z_3), J_{ij}(z_2z_4), J_{ij}(z_3z_4)>$$
with $i,j=1,2,3$ and $i\neq j$ where $J_{ij}(z_1z_2)=\frac{\partial f_i}{\partial z_1}\frac{\partial f_j}{\partial z_2}-\frac{\partial f_i}{\partial z_2}\frac{\partial f_j}{\partial z_1}$. A generating set for ${\mathcal J}$  is 
\begin{align*}
  \begin{array}{ccccccccc}
 & 2z^2+yw, & 6x^2z+17w^2, & xzw, & xyw, & x^2w, & xyz, &  &  \\
 3x^2y-17zw, & 19y^3w+4x^2w, & y^3z+x^2z+2w^2, & y^4+x^2y-4zw, & x^4 & xy^2 & w^2 & yzw & \\
  \end{array}
\end{align*}

\noindent The length of the ideal ${\mathcal J}$ is $17$. It is hard to compute the integral closure of that ideal. However, we can still make use of the formula given in \ref{eqn3} in order to get an estimation on $\mathcal{L}_0({\mathcal J})$.   If for some $a,b\in \mathbb{Z}_{\geq 1}$ we have ${\mathcal M}^a\subseteq {\mathcal J}^b$, then this implies that $\mathcal{L}_0({\mathcal J})\leq \frac{a}{b}$. We obtain ${\mathcal M}^{5}\subset {\mathcal J}$ and the best estimation we get is the inclusion ${\mathcal M}^{4t+1}\subset {\mathcal J}^{t}$ with $t\in \mathbb N^*$, so we can say $\mathcal{L}_0(X)=\mathcal{L}_0( {\mathcal J})\leq 4$. We also conclude that ${D_{\mathfrak p}}$ is among the divisors with 
length $\leq 17$. In Table \ref{tablo5}, we give all possible $\mathcal{L}_0(D)$ for the divisors $D$ with lengths $\leq 17$  and we see that the inequality \ref{sinir} holds.

\noindent Now, consider the surface $X\subset \mathbb{C}^4$ of  $E_{6,0}$-type singularity. A generating set of the Jacobian ideal ${\mathcal J}$  is 
 \begin{align*}
 \begin{array}{ccccccc}
   w^2, & zw, & y^2w-2w^2, & xyw, & x^2w-4yzw, & z^3, & xz^2,  \\
     xyz, & 2x^2z-4yz^2-3w^2, & y^3+2z^2+yw,  & xy^2, &  x^2y-2y^2z-4zw, & y^2z^2 & x^4\\
   \end{array}
\end{align*}
\noindent  The length $\ell ({\mathcal J})$ equals $16$. Again we proceed with $\mathcal J$ instead of the integral closure of $\mathcal J$ and we obtain ${\mathcal M}^5\subseteq {\mathcal J}$ but ${\mathcal M}^4\nsubseteq {\mathcal J}$. More precisely, we have ${\mathcal M}^{8t}\subset {\mathcal J}^{2t}$  with $t\in \mathbb N^*$. Hence $\mathcal{L}_0(X)=\mathcal{L}_0( {\mathcal J})\leq 4$ and we again have the inequality.

\newpage 

{\tiny
\begin{table}
\centering 
   \begin{tabular}{ | p{2.7cm} | p{0.6cm}  | p{0.6cm}  | | p{3.4cm} | p{0.6cm}  | p{0.6cm}  | }
\hline
Some elements in $\mathcal E(\pi )$ &  $ l(I)$ &  $\pounds_0(I)$ & Some elements in $\mathcal E(\pi )$ &  $ l(I)$ &  $\pounds_0(I)$ \\  [0.5ex]  
\hline 
$(1, 3, 4, 3, 2,1, 2)*$        &  1 &  1  & $(1,3,5,4,3,2,1,3)*$ &          1 & 1\\
  \hline
$(2, 3, 4, 3, 2,1, 2)*$        &  2 &  2  & $(2,4,6,5,4,3,2,3)$ &          2 & 2 \\
  \hline
$(2, 4, 6, 5, 4,2, 3)$          &  3 &  2  & $(2,5,8,7,6,4,2,4)$ &           3 & 2 \\
  \hline
$(2, 4, 6, 5, 4,3, 3)$          &  4 &  3  & $(2,6,10,8,6,4,2,5)*$ &          4 & 2 \\
  \hline
$(2, 5, 8, 6, 4,2, 4)$          &  4 &  2  & $(2,5,8,7,6,5,3,4)$ &           4 & 3 \\
  \hline
$(2, 5, 8, 7, 5,3, 4)$          &  5 &  3  & $(3,7,10,8,6,4,2,5)$ &          5 & 3 \\
  \hline
$(2, 6,10, 9, 6,3, 5)*$        &  8 &  3  & $(2,6,10,9,8,6,3,5)$ &         6 & 3 \\
  \hline
$(2, 6,10, 8, 6,4, 5)$         &  8 &  4  & $(2,5,8,7,6,5,4,4)$ &          6 & 4 \\
  \hline
$(2, 6,10, 8, 6,3, 6)$         &  8 &  3  & $(3,8,12,10,8,6,3,6)$ &         7 & 3\\
  \hline
$(3, 7,11, 9, 6,3, 6)$         &  9 &  3  & $D_{\mathfrak{p}}=(4,7,10,8,6,4,2,5)*$ &    8 & 4 \\
  \hline
$(2, 5, 8, 7, 6,5, 4)$          & 10 &  5  & $(2,6,10,9,8,7,6,5)*$ &        14 & 6 \\
  \hline
$(2, 6,10, 9, 8,4, 5)*$       & 11 &  4  & $(4,10,16,14,12,10,5,8)$   &          17 & 5\\
  \hline
 $(3, 8,13,11, 8,4, 7)$       & 13 &  4  & $(4,10,16,14,12,9,6,8)$    &       17 & 6 \\
    \hline
$(3, 9,14,12, 9,5, 7)$       & 16 &  5  &   $(4,10,16,14,12,10,5,8)$     &        17 & 5 \\
  \hline
$(3, 9,14,11, 8,5, 7)$       & 16 &  4  & $(4,11,18,15,12,9,5,10)$     &         17 & 5 \\
  \hline
$(3, 9,15,12,8,4, 8)$        & 16 &  6  &  $(5,11,17,14,11,8,5,9)$         &        17 & 5\\
  \hline
$(4,10,14,11,8,4,7)$        & 16 &  4  &  $(5,12,18,15,12,8,4,9)$        &      17    & 5  \\
  \hline
$(4, 9,14,12, 8,4, 7)$       & 16 & 4  & $(4,12,19,16,12,8,4,10)$      &        17 & 5  \\
  \hline
$(4,9,14,11,8,5,7)$          & 16 &  5 & $(4,12,18,15,12,9,5,9)$         &        17 & 5 \\
  \hline
$(2, 6,10, 9, 8,7, 5)*$       & 20 &  7  & $(7,21,30,24,18,12,6,15)*$ &    49 & 7 \\
  \hline
$(4,12,20,15,10,5,10)*$    & 28&5 & $(6,18,30,27,24,16,8,15)*$ &    57 & 8 \\
  \hline
$(4,12,20,15,10,5,13)*$.   &37&13/2&  $(8,24,40,36,27,18,9,20)*$ &    86 & 9 \\ [1ex] 
  \hline                           
\end{tabular}
\caption{For $E_{6,0}$ and $E_{0,7}$ singularities} 
\label{tablo5} 
\end{table}
}

\noindent In a similar way, we obtain the following bounds on  $\mathcal{L}_0(X)$ when $X$ is of other RTP-type singularities.

{\tiny
\renewcommand*{\arraystretch}{1,3}
\begin{table}[h]
    \begin{tabular}{ | p{1.0cm} | p{5.4cm}  | p{2.9cm} | }
    \hline  
{\bf RTP}                 &   $u$ with $\ell ({\mathcal J})\leq u$   &  $j$ with $\mathcal{L}_0({\mathcal J})\leq j$\\
\hline
$A_{k,\ell ,m}$        &   $k+\ell +m+5$           & $k+\ell $ with $k\geq \ell \geq m\geq 1$  \\
\hline
$B_{k,2\ell }$          &   $5\ell +k+2$   for $l\geq k+1$, $k+4l+2$ for $l<k+1$  &   $k+\ell +1$  \\
\hline
$B_{k,2\ell -1}$       &   $5\ell +k$ for $\ell \leq k$, $5\ell +k-1$ for $\ell \geq k+1$ & $k+\ell $ \\
\hline
$C_{k-1,\ell+1}$      &   $k+\ell +8$   &   $k+2$ \\
\hline
$D_{k-1}$                &   $k+13$       &  $k+3$  \\
\hline
$F_{k-1}$                &   $k+16$        &  $k+3$  \\
\hline
 $H_{3k-1}$             &   $6k+2$    &  $2k$ \\
\hline
$H_{3k}$                 &   $6k+4$     &   $2k$  \\
\hline
 $H_{3k+1}$            &    $6k+6$      &   $2k+1$ \\
\hline
$E_{6,0}$                &      $16$           &   4 \\
\hline
$E_{0,7}$                &     $17$            &   5    \\ 
\hline
$E_{7,0}$                &     $17$             &   5   \\ 
\hline
 \end{tabular}
\caption{$\ell ({\mathcal J})$ and $\mathcal{L}_0({\mathcal J})$ for RTP-singularities respectively}
\label{table}
\end{table}
}

\end{proof}

\noindent The computation above gives a nice upper bound on $\mathcal{L}_0(X)$ which permits also to determine the approximate location of $D_{\mathfrak{p}}$.

\begin{conj}
The inequality in Theorem \ref{sinir} is true for a rational singularity of higher multiplicity.
\end{conj}

\vskip.2cm

\noindent {\bf Funding and/or Conflicts of interests/Competing interest:} The authors have no relevant financial or non-financial interests to disclose.

\vskip.2cm


\end{document}